\numberwithin{equation}{section}
\theoremstyle{plain}
\newtheorem{teo}{Theorem}[section]
\newtheorem{coro}[teo]{Corollary}
\newtheorem{prop}[teo]{Proposition}
 \theoremstyle{definition}
\newtheorem{defi}[teo]{Definition}
\newtheorem{conj}[teo]{Conjecture}
\newtheorem{obs}[teo]{Remark}
\newtheorem{?}[teo]{Problem}
\newtheorem{ej}[teo]{Example}
\newcommand{\rk}{\operatorname{rk}}
\newcommand{\R}{\mathbb{R}}
\newcommand{\Z}{\mathbb{Z}}
\begin{document}

\title[Integer points on independence polytopes and half-open hypersimplices]{Integer point enumeration on independence polytopes and half-open hypersimplices}

\author[L. Ferroni]{Luis Ferroni}
\thanks{The author is supported by the Marie Sk{\l}odowska-Curie PhD fellowship as part of the program INdAM-DP-COFUND-2015.}

\address{Universit\`a di Bologna, Dipartimento di Matematica, Piazza di Porta San Donato, 5, 40126 Bologna BO - Italia} 

\email{ferroniluis@gmail.com}

\subjclass[2010]{05B35, 52B20, 11B73}

\begin{abstract} 
    In this paper we investigate the Ehrhart Theory of the independence matroid polytope of uniform matroids. It is proved that these polytopes have an Ehrhart polynomial with positive coefficients. To do that, we prove that indeed all half-open-hypersimplices are Ehrhart positive, and tile disjointly our polytope using them.
\end{abstract}

\maketitle

\section{Introduction}

Let us consider a polytope $\mathscr{P}\subseteq \R^n$ having vertices with integer coordinates. The function counting the number of integer points on each integral dilation of $\mathscr{P}$,
    \[ i(\mathscr{P}, t) := \# (t\mathscr{P} \cap \Z^n)\]
happens to be a polynomial which in the literature is called the \textit{Ehrhart polynomial} of $\mathscr{P}$ \cite{ehrhart,beck}.

The Ehrhart polynomial of the members of some basic families of polytopes, such as \textit{regular simplices}, \textit{cross-polytopes} and \textit{hypercubes} can be calculated by hand \cite{beck}. More complicated formulas exist for other families such as \textit{Pitman-Stanley polytopes} \cite{stanleypitman}, \textit{$\mathcal{Y}$-generalized permutohedra} \cite{postnikov} and some subfamilies of \textit{flow polytopes} \cite{meszaros}.\\

A natural question that arises when studying the Ehrhart polynomial of a polytope is whether its coefficients are positive. In \cite{liu} Fu Liu gave an extensive list of the polytopes that are known to have this property. In \cite{fuliu} Castillo and Liu conjectured:

\begin{conj}\label{genperpos}
    If $\mathscr{P}$ is an integral generalized permutohedron then its Ehrhart polynomial has positive coefficients.
\end{conj}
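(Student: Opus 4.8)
The plan is to attack Conjecture~\ref{genperpos} through a disjoint \emph{half-open polyhedral subdivision} into building blocks whose Ehrhart polynomials are manifestly positive. Recall that a generalized permutohedron is a deformation of the standard permutohedron, so its normal fan coarsens the braid-arrangement fan; the hypersimplices $\Delta_{k,n}$, being the base polytopes of the uniform matroids $U_{k,n}$, are among the simplest such polytopes. The guiding identity is
\[
    i(\mathscr{P},t)=\sum_{j} i(\mathscr{Q}_j,t),
\]
valid whenever $\mathscr{P}$ is partitioned, as a set, into half-open pieces $\mathscr{Q}_j$. Thus, if every $\mathscr{Q}_j$ has an Ehrhart polynomial with positive coefficients, then so does $\mathscr{P}$, term by term, and the positivity question is reduced to understanding the pieces.

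First I would isolate the role of the hypersimplices. The key preliminary step is to prove that every \emph{half-open hypersimplex} has an Ehrhart polynomial with positive coefficients; this single, tractable computation controls the whole family of uniform matroids. Indeed, the hypercube, and more generally the independence polytope $\{x\in[0,1]^n:\sum_i x_i\le k\}$ of $U_{k,n}$, decomposes disjointly into half-open slabs $j-1<\sum_i x_i\le j$, each of which is a half-open hypersimplex. Summing the resulting Ehrhart polynomials then yields positivity for the independence polytopes of uniform matroids, a genuine subfamily of integral generalized permutohedra, and establishes the conjecture in that case.

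Second, I would try to extend the subdivision beyond the uniform case. Here the aim is to produce, for an arbitrary integral generalized permutohedron, a disjoint half-open subdivision whose atoms are dilated or translated copies of Ehrhart-positive pieces, ideally hypersimplices or their half-open relatives, exploiting the fact that the normal fan refines the braid fan in order to control the combinatorics of the subdivision.

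The hard part will be precisely this last step. There is no known finite collection of Ehrhart-positive atoms that simultaneously tiles \emph{every} generalized permutohedron, and Ehrhart positivity behaves poorly under the natural operations on this class: Minkowski sums and differences do not act predictably on Ehrhart coefficients, and even the positivity of the Ehrhart polynomials of general matroid base polytopes is unsettled. I therefore expect that the full conjecture resists a single uniform argument, and that the realistic program is to secure the half-open tiling, and hence positivity, on structured subfamilies first; the uniform-matroid case treated here is meant to serve as exactly such a testing ground.
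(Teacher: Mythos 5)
You should first be clear about the status of the statement you were asked to prove: it is a \emph{conjecture} (due to Castillo and Liu), and the paper does not prove it --- nor does your proposal, as you yourself concede in your final paragraph. So judged strictly as a proof of Conjecture~\ref{genperpos}, the proposal has a fatal and admitted gap: the entire reduction of an \emph{arbitrary} integral generalized permutohedron to Ehrhart-positive half-open atoms is missing, and you give no candidate construction for it. What your proposal actually contains is a program for the special case that constitutes the paper's real content: the independence polytope $\mathscr{P}_I(U_{k,n})=\{x\in[0,1]^n:\sum_i x_i\le k\}$, sliced into the pieces $j-1<\sum_i x_i\le j$, which is exactly the paper's decomposition $\mathscr{P}_I(U_{k,n})=\Delta'_{1,n+1}\sqcup\cdots\sqcup\Delta'_{k,n+1}$. (One small correction even here: your first slab $0<\sum_i x_i\le 1$ omits the origin; the paper avoids this by defining $\Delta'_{1,n}:=\Delta_{1,n}$ to be the closed simplex, so that the constant term of the Ehrhart polynomial comes out to $1$.)

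The more serious internal gap is that you treat the key lemma --- positivity of the Ehrhart coefficients of every half-open hypersimplex --- as ``a single, tractable computation'' and never carry it out. This is precisely where all the work in the paper lies, and it is not routine: one first writes $i(\Delta'_{k,n},t)=i(\Delta_{k,n},t)-i(\Delta_{k-1,n-1},t)$, so positivity becomes the inequality $e_{k,n,m}>e_{k-1,n-1,m}$ between coefficients of \emph{closed} hypersimplices, which a priori could fail since one is subtracting polynomials with positive coefficients. The paper's proof uses the explicit formula $e_{k,n,m}=\frac{1}{(n-1)!}\sum_{\ell=0}^{k-1}W(\ell,n,m+1)A(m,k-\ell-1)$ in terms of weighted Lah numbers and Eulerian numbers, the reflection symmetry $e_{k,n,m}=e_{n-k,n,m}$ to rewrite the right-hand side as $e_{n-k,n-1,m}$, and then a combinatorial recurrence yielding the strict inequality $W(\ell,n,m)>(n-1)\,W(\ell,n-1,m)$, which is proved by exhibiting a nonzero term (an unsigned Stirling number of the first kind) in the recurrence. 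Without this, or some substitute argument, your proposal establishes nothing beyond the (known) positivity of closed hypersimplices, and even the uniform-matroid case remains open in your write-up.
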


A generalized permutohedron is a polytope that has all of its edges (i.e. its one dimensional faces) parallel to some vector of the form $e_i - e_j$, where $e_i$ is the canonical vector in $\R^n$ having a one on the $i$-th position and zeros elsewhere, and analogously for $e_j$.

This assertion implies that, for instance, all matroid polytopes are Ehrhart positive, as was also conjectured by De Loera et al \cite{deloera}. In fact, matroid polytopes are exactly those generalized permutohedra having vertices with $0/1$-vertices as was follows from the characterization of matroid polytopes given in \cite{GGMS}.\\

Also, in \cite{ardila} generalized permutohedra are characterized as the polytopes arising as a Minkowski signed sum of dilated simplices that satisfies a certain modularity property. The subfamily of $\mathcal{Y}$-generalized permutohedra can then be recovered as that of strictly positive Minkowski sum of dilated simplices (always satisfying the aforementioned modularity property).

This large subclass of generalized permutohedra consist of Ehrhart positive polytopes, as was proved by Postnikov \cite{postnikov}. However, there are examples of matroid polytopes that are not $\mathcal{Y}$-generalized permutohedra \cite{ardila}. In fact, from the work of Ardila, Benedetti and Doker it follows that if a connected matroid has a rank and corank greater than 1, it is not a $\mathcal{Y}$-generalized permutohedron (the signed $\beta$ invariant defined therein has to be negative for some contraction of $M$).\\

Recall that the $(k,n)$-hypersimplex is defined as:
    \[ \Delta_{k,n} = \left\{x\in [0,1]^n : \sum_{i=1}^n x_i = k\right\}.\]
This polytope appears in several diverse contexts within algebraic combinatorics. For what occupies us here, this is the basis polytope of the uniform matroid $U_{k,n}$, and as such, it is a generalized permutohedron. As we stated before, it is not a $\mathcal{Y}$-generalized permutohedron when $1 < k < n - 1$. In spite of that, recently the author proved that it has an Ehrhart polynomial with positive coefficients \cite{ferroni}.\\

Very little is known about the Ehrhart polynomials of the independence polytope of matroids. We will prove the following easy but sometimes disregarded fact:

\begin{teo}
    The independence matroid polytope $\mathscr{P}_I$ of a matroid is integrally equivalent to a generalized permutohedron that we call $\widetilde{\mathscr{P}_I}$.
\end{teo}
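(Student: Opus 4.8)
The plan is to realize $\widetilde{\mathscr{P}_I}$ explicitly as the image of $\mathscr{P}_I$ under a single ``lifting'' map that adds one coordinate and trades the extra dimension for membership in a fixed affine hyperplane. Write the matroid $M$ on the ground set $[n]=\{1,\dots,n\}$, let $r=\rk([n])$ be its rank, and recall that $\mathscr{P}_I=\operatorname{conv}\{e_I:I\text{ independent}\}$, where $e_I=\sum_{i\in I}e_i$; by Edmonds' description it equals $\{x\in\R^n:x_i\ge 0\ \forall i,\ \sum_{i\in S}x_i\le \rk(S)\ \forall S\subseteq[n]\}$. Introduce a new coordinate indexed by $0$, set $E=\{0,1,\dots,n\}$, and define the affine map
\[
    \phi\colon\R^n\to\R^{n+1},\qquad \phi(x)=\Bigl(r-\textstyle\sum_{i=1}^n x_i,\ x_1,\dots,x_n\Bigr),
\]
so that $\phi(e_I)=e_I+(r-|I|)\,e_0$. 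I will take $\widetilde{\mathscr{P}_I}:=\phi(\mathscr{P}_I)$.

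First I would settle the integral equivalence, which is the easy half. The map $\phi$ is affine and injective (the last $n$ coordinates recover $x$), its linear part sends $\Z^n$ into $\Z^{n+1}$, and it carries $\Z^n$ bijectively onto $\{y\in\Z^{n+1}:\sum_{i\in E}y_i=r\}$, which is the full integer lattice of the hyperplane $\{\sum_{i\in E}y_i=r\}$ containing $\widetilde{\mathscr{P}_I}$. Since moreover $\phi(\mathscr{P}_I)=\widetilde{\mathscr{P}_I}$, this is precisely a lattice-preserving affine bijection between the two polytopes, i.e. an integral equivalence, so in particular they share the same Ehrhart polynomial.

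The substantive half is to check that $\widetilde{\mathscr{P}_I}$ is a generalized permutohedron, i.e. that all its edges are parallel to some $e_a-e_b$ with $a,b\in E$. Because $\phi$ is an affine isomorphism onto its image, it maps edges of $\mathscr{P}_I$ to edges of $\widetilde{\mathscr{P}_I}$, transforming an edge direction $v\in\R^n$ into its linear image $\bigl(-\sum_i v_i,\,v\bigr)$. The edge directions of the independence polytope are, by the matroid exchange property, of only two kinds: $e_i-e_j$ (a symmetric exchange between independent sets of equal size) and $e_i$ (adjoining a single element). The first has coordinate-sum $0$ and is therefore fixed by $\phi$, while the second has coordinate-sum $1$ and is sent to $e_i-e_0$. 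Both outputs are of the form $e_a-e_b$, which is exactly what is required.

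The main point to nail down carefully is that the edges of $\mathscr{P}_I$ are indeed only of those two types; this is where I expect the work to concentrate, and it is cleanest either to argue it through linear optimization by the greedy algorithm, or to bypass it altogether by producing the inequality description of $\widetilde{\mathscr{P}_I}$ directly. Complementing the constraints $\sum_{i\in S}x_i\le\rk(S)$ against the hyperplane $\sum_{i\in E}y_i=r$ rewrites $\widetilde{\mathscr{P}_I}$ in the form $\{y:\sum_{i\in E}y_i=r,\ \sum_{i\in T}y_i\ge z_T\ \forall T\subseteq E\}$, with $z_T=0$ for $0\notin T$ and $z_{\{0\}\cup S}=r-\rk([n]\setminus S)$; one then checks that $z$ is supermodular, where both the required monotonicity and the supermodularity of $S\mapsto r-\rk([n]\setminus S)$ follow from submodularity of $\rk$. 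Hence $\widetilde{\mathscr{P}_I}$ is defined by a supermodular function and is therefore a generalized permutohedron. Either route finishes the proof.
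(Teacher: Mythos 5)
Your construction coincides with the paper's: your $\phi$ sends $e_I$ to $e_I+(r-|I|)e_0$, so $\phi(\mathscr{P}_I)$ is exactly the paper's lifted independence polytope $\widetilde{\mathscr{P}}_I(M)=\operatorname{convex\, hull}\{(e_I,k-\rk(I)) : I \text{ independent}\}$, just with the extra coordinate placed first instead of last, and your lattice computation supplies the integral-equivalence argument that the paper dismisses as evident. For the generalized-permutohedron property, your first route is precisely the paper's proof: it takes two adjacent lifted vertices, invokes the classification of edges of $\mathscr{P}_I(M)$ (directions $e_i-e_j$, $e_i$, $-e_i$ --- a theorem the paper quotes from Schrijver without proof), and computes that the lift turns $e_i-e_j$ into $(e_i-e_j,0)$ and $e_i$ into $(e_i,-1)$, the same computation you do. Where you genuinely diverge is your second route: writing $\phi(\mathscr{P}_I)$ inside the hyperplane $\sum_{i\in E}y_i=r$ as $\{y:\sum_{i\in T}y_i\ge z_T \ \forall T\subseteq E\}$ with $z_T=0$ for $0\notin T$ and $z_{\{0\}\cup S}=r-\rk([n]\setminus S)$, and checking supermodularity of $z$ (the case of two sets containing $0$ is submodularity of $\rk$ applied to complements, the mixed case is monotonicity of $\rk$, the remaining case is trivial). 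This buys a proof that does not depend on the edge classification at all --- the one step that both you and the paper otherwise treat as a black box --- at the price of invoking the standard equivalence between supermodular inequality descriptions and generalized permutohedra; to make it airtight you should record that the supermodular system cuts out exactly $\phi(\mathscr{P}_I)$ (both inclusions, which your identification of $z$ essentially establishes) rather than something larger.
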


Hence, Conjecture \ref{genperpos} would imply that independence matroid polytopes are Ehrhart positive. This polytope $\widetilde{P}_I$ is not a $\mathcal{Y}$-generalized permutohedron, because of the same reason that the basis polytope of a matroid is not.\\

All that was said so far was to make the following result plausible.

\begin{teo}
    The independence matroid polytope of the uniform matroid $U_{k,n}$, which is given by
        \[\mathscr{P}_I (U_{k,n}) = \left\{ x\in [0,1]^n : \sum_{i=1}^n x_i \leq k\right\},\]
    is Ehrhart positive. 
\end{teo}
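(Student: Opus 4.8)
The plan is to slice $\mathscr{P}_I(U_{k,n})$ into full-dimensional \emph{half-open hypersimplices} and count lattice points slab by slab. For $1 \le j \le n$ set
\[ \Delta'_{j,n} := \Bigl\{x \in [0,1]^n : j-1 < \sum_{i=1}^n x_i \le j\Bigr\}, \]
and let $\Delta'_{0,n} := \{0\}$. Since the cutting hyperplanes $\sum_i x_i = j$ sit at integer sum-levels, a point of $\mathscr{P}_I(U_{k,n})$ with coordinate sum $s$ lies in exactly the slab indexed by $\lceil s \rceil$, so that
\[ \mathscr{P}_I(U_{k,n}) = \Delta'_{0,n} \sqcup \Delta'_{1,n} \sqcup \cdots \sqcup \Delta'_{k,n} \]
is a genuine disjoint union. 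The decisive point is that this disjointness survives integer dilation: dilating by $t$ moves the cuts to the integer levels $s = tj$, and every lattice point of $t\,\mathscr{P}_I(U_{k,n})$ with sum $s$ falls into precisely the dilated slab indexed by $\lceil s/t\rceil$. Hence the lattice-point counts are additive,
\[ i(\mathscr{P}_I(U_{k,n}), t) = 1 + \sum_{j=1}^{k} i(\Delta'_{j,n}, t). \]

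Each half-open slab is a half-open lattice polytope, so $i(\Delta'_{j,n}, t)$ is a bona fide polynomial in $t$: writing $\overline{S}_{j,n}$ for the closed slab $\{\,j-1 \le \sum_i x_i \le j\,\}\cap[0,1]^n$ and $\Delta_{j-1,n}$ for its bottom facet (an ordinary hypersimplex), one has $i(\Delta'_{j,n}, t) = i(\overline{S}_{j,n}, t) - i(\Delta_{j-1,n}, t)$, a \emph{difference} of two Ehrhart polynomials. Thus the theorem reduces to the following \emph{Main Lemma}: for every $1 \le j \le n$ the polynomial $i(\Delta'_{j,n}, t)$ has nonnegative coefficients, strictly positive in every degree $1 \le m \le n$ (its constant term necessarily vanishing, as $0\cdot\Delta'_{j,n}=\varnothing$). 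Granting this, the coefficient of $t^m$ in $i(\mathscr{P}_I(U_{k,n}), t)$ is for each $m \ge 1$ a sum of positive numbers, whereas its constant term equals $1$; hence every coefficient is positive, as desired.

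To attack the Main Lemma I would first produce a closed formula. Writing $c(n,s,t)$ for the number of $x \in \{0,\dots,t\}^n$ with $\sum_i x_i = s$, inclusion–exclusion on the upper constraints $x_i \le t$ gives $c(n,s,t) = \sum_{i\ge 0}(-1)^i\binom{n}{i}\binom{s - i(t+1)+n-1}{n-1}$, and summing over $s \in \{t(j-1)+1,\dots,tj\}$ by the hockey-stick identity yields
\[ i(\Delta'_{j,n}, t) = \sum_{i\ge 0}(-1)^i \binom{n}{i}\left[\binom{n + t(j-i) - i}{n} - \binom{n + t(j-1-i) - i}{n}\right]. \]
The crux—and the step I expect to be the main obstacle—is to expand these shifted binomials as polynomials in $t$ and to show that, after all the cancellation, the coefficient of each power $t^m$ is positive. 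This positivity is genuinely delicate and cannot be obtained formally, precisely because $i(\Delta'_{j,n}, t)$ is the difference of the two Ehrhart-positive polynomials above, and differences of Ehrhart-positive polynomials need not be positive.

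I would try to tame the alternating sum in one of two ways. The first is to exploit the involution $x \mapsto \mathbf{1}-x$ of the cube, which carries $\Delta'_{j,n}$ to the oppositely half-open slab at level $n+1-j$; this symmetry lets me restrict attention to $j \le (n+1)/2$ and pairs terms across the middle. The second is to adapt, in this cleaner half-open setting, the coefficient estimates used to establish Ehrhart positivity of the ordinary hypersimplices in \cite{ferroni}, where one expects the half-open count to behave more uniformly than the closed one. Either route reduces everything to a term-by-term inequality among binomial coefficients, and verifying that inequality simultaneously for all $m$, $j$ and $n$ is where the real work lies. Once the Main Lemma is in hand, summing over $j$ as above completes the proof.
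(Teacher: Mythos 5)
Your slicing of $\mathscr{P}_I(U_{k,n})$ into half-open slabs is exactly the decomposition the paper uses (its $\Delta'_{j,n+1}$ is your $\Delta'_{j,n}$), and the additivity of lattice-point counts under dilation works just as you say; up to that point your argument coincides with the paper's proof of this theorem. But the paper's proof of this statement is short precisely because your ``Main Lemma'' --- that every coefficient of $i(\Delta'_{j,n},t)$ in degrees $1$ through $n$ is positive --- is proved as a separate theorem beforehand, and that theorem is the real content of the paper. Your proposal leaves it unproven: you write down an inclusion--exclusion formula, concede that ``this positivity is genuinely delicate and cannot be obtained formally,'' and offer two candidate routes without carrying either out. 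That is a genuine gap, and not a small one: the statement you reduce to is essentially as hard as the theorem itself, since differences of Ehrhart-positive polynomials (here the closed slab minus its bottom hypersimplex facet) have no formal reason to be positive, as you yourself note.

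For the record, the paper closes this gap along the lines of your second suggested route, with your first route appearing as an intermediate step. It writes $i(\Delta'_{j,n},t) = i(\Delta_{j,n},t) - i(\Delta_{j-1,n-1},t)$ and invokes the explicit coefficient formula from \cite{ferroni}: the degree-$m$ coefficient of $i(\Delta_{k,n},t)$ is $e_{k,n,m} = \frac{1}{(n-1)!}\sum_{\ell=0}^{k-1} W(\ell,n,m+1)\,A(m,k-\ell-1)$, where $W$ are the weighted Lah numbers and $A$ the Eulerian numbers. The needed inequality $e_{k,n,m} > e_{k-1,n-1,m}$ is converted, via the reflection symmetry $e_{k,n,m} = e_{n-k,n,m}$ (your involution $x \mapsto \mathbf{1}-x$ of the cube), into a comparison at a fixed rank parameter, $e_{k',n,m} > e_{k',n-1,m}$, and this follows term by term from a recurrence for weighted Lah numbers yielding $W(\ell,n,m) > (n-1)\,W(\ell,n-1,m)$. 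That combinatorial inequality is the decisive step your proposal is missing; it is what replaces any direct term-by-term analysis of your alternating binomial sum, which would be considerably more painful to push through.
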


We are going to prove it. The key step is to prove that all \textit{half-open hypersimplices} introduced by Nan Li \cite{nanli}, defined by
    \[ \Delta'_{k,n} = \left\{x\in [0,1]^{n-1} : k-1 < \sum_{i=1}^{n-1} x_i \leq k \right\}\]
for $k>1$ and $\Delta'_{1,n}:=\Delta_{1,n}$ are Ehrhart positive. We use them to tile disjointly the independence polytopes of uniform matroids. This technique introduces a new tool to prove the Ehrhart positivity of a polytope.

\section{Independence Matroid Polytopes}

\begin{defi}
    Let $M=(E=\{1,\ldots,n\},\rk)$ be a matroid of rank $k$ and cardinality $n$. For each subset $A$ of $E$, let us denote:
            \[ e_A := \sum_{i\in A} e_i,\]
    where $e_i$ is the $i$-th canonical vector in $\mathbb{R}^n$.
    The \textit{basis polytope} $\mathscr{P}(M)$ and the \textit{independence polytope} $\mathscr{P}_I(M)$ are defined, respectively, as:
        \begin{align*}
        \mathscr{P}(M) &:= \operatorname{convex\, hull}\{e_B : B\subseteq E \text{ is a basis}\},\\
        \mathscr{P}_I(M) &:= \operatorname{convex\, hull}\{e_I : I\subseteq E \text{ is independent}\}.
        \end{align*}
\end{defi}

Evidently the basis polytope $\mathscr{P}(M)$ is a facet of $\mathscr{P}_I(M)$. Also, notice that any of these two polytopes does in fact determine the matroid $M$. There exist characterizations for all polytopes arising as a basis polytope \cite{GGMS} or as an independence polytope of a matroid \cite{schrijver}.

\begin{teo}
    A polytope $\mathscr{P}\subseteq \R^n$ is the basis polytope of a matroid with $n$ elements if and only if $\mathscr{P}$ satisfies the following two conditions:
    \begin{itemize}
        \item All the vertices of $\mathscr{P}$ have $0/1$ coordinates.
        \item All the edges of $\mathscr{P}$ are of the form $e_i-e_j$.
    \end{itemize}
\end{teo}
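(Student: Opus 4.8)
The plan is to prove the two implications separately, viewing the characterization as the assertion that the vertex set of $\mathscr{P}$ records the bases of a matroid exactly when the geometry is $0/1$ with $e_i-e_j$ edges. The forward direction is the routine one. If $\mathscr{P}=\mathscr{P}(M)$, then by definition its vertices are among the $e_B$ with $B$ a basis, and each $e_B$ is genuinely a vertex since it is the unique maximiser over $\mathscr{P}$ of the functional $x\mapsto\sum_{i\in B}x_i$; hence every vertex has $0/1$ coordinates. For the edges I would invoke the symmetric exchange property of matroids: if $B,B'$ are bases with $|B\setminus B'|\ge 2$ and $i\in B\setminus B'$, there is $j\in B'\setminus B$ with $B-i+j$ and $B'-j+i$ both bases, so that
\[ e_B+e_{B'} = e_{B-i+j}+e_{B'-j+i} \]
exhibits the midpoint of $[e_B,e_{B'}]$ also as the midpoint of a second segment joining two distinct vertices. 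A short check that all four vertices are distinct then shows $[e_B,e_{B'}]$ is not a $1$-face, so every edge joins bases with $|B\setminus B'|=1$ and is therefore of the form $e_i-e_j$.

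The substance is the converse. Assume $\mathscr{P}$ has $0/1$ vertices and all edges parallel to some $e_i-e_j$. Since $e_i-e_j$ has zero coordinate sum and the $1$-skeleton of a polytope is connected, the function $x\mapsto\sum_i x_i$ is constant on the vertices; being integral it equals some $k$, so each vertex is $e_B$ for a $k$-subset $B$. Let $\mathcal{B}$ be the resulting family; the goal is to show it obeys the basis exchange axiom. Fix $B,B'\in\mathcal{B}$ and $i_0\in B\setminus B'$. The key geometric input is the tangent (feasible-direction) cone $K_B$ of $\mathscr{P}$ at $e_B$: it is generated by the edge directions at $e_B$, and because the vertices are $0/1$ on the hyperplane $\sum_i x_i=k$, every such direction is of the form $e_j-e_i$ with $i\in B$, $j\notin B$ and $B-i+j\in\mathcal{B}$. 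Since $\mathscr{P}-e_B\subseteq K_B$, the vector $e_{B'}-e_B$ lies in $K_B$ and I can write
\[ e_{B'}-e_B = \sum_{(i,j)}\lambda_{ij}\,(e_j-e_i),\qquad \lambda_{ij}\ge 0, \]
the sum ranging over pairs with $B-i+j\in\mathcal{B}$.

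The exchange axiom then falls out of reading this identity coordinate by coordinate. Because each $e_j-e_i$ contributes $-1$ in position $i$ and $+1$ in position $j$, and every index appears with a definite sign determined by its membership in $B$ and $B'$, nonnegativity of the $\lambda_{ij}$ confines the support of the combination to the ``useful'' exchanges: comparing coordinates at $m\in B\cap B'$ shows no term removes an element of $B\cap B'$, and at $m\notin B\cup B'$ shows no term inserts an element outside $B\cup B'$, so every active pair has $i\in B\setminus B'$ and $j\in B'\setminus B$. Reading the $i_0$ coordinate gives $\sum_j\lambda_{i_0 j}=1>0$, so some active pair has first index $i_0$; for it $j\in B'\setminus B$ and $B-i_0+j\in\mathcal{B}$, which is exactly the exchange required. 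Hence $\mathcal{B}$ is a matroid and $\mathscr{P}=\mathscr{P}(M)$. I expect the main obstacle to be precisely this last step: it is tempting to read only the $i_0$ coordinate, which merely yields some $j\notin B$, and one must use the sign constraints at all coordinates simultaneously to force the exchange partner into $B'\setminus B$. The other delicate point is justifying that $K_B$ is generated by genuine edge directions, so that each contributing $e_j-e_i$ corresponds to an actual member of $\mathcal{B}$; this is where the hypothesis on the edges is really consumed.
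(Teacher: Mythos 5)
Your proof is correct. Note, however, that the paper itself offers no proof of this statement: it is quoted as a known characterization, with a citation to Gelfand--Goresky--MacPherson--Serganova (the reference \cite{GGMS}), so there is no argument in the paper to compare yours against line by line. What you have written is essentially the standard proof of the GGMS theorem, and both directions are sound: in the forward direction, the midpoint identity $e_B+e_{B'}=e_{B-i+j}+e_{B'-j+i}$ correctly rules out edges between bases with $|B\setminus B'|\ge 2$, and your coordinate-by-coordinate reading of the conic decomposition in the converse correctly extracts the exchange axiom (including the subtle point that the sign constraints at coordinates outside $B\cup B'$ force the exchange partner into $B'\setminus B$, which you rightly flag). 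Two external inputs deserve explicit acknowledgment, since they carry real weight: first, the forward direction uses Brualdi's \emph{symmetric} exchange theorem (that both $B-i+j$ and $B'-j+i$ are bases), which is genuinely stronger than the basis exchange axiom and is itself a nontrivial, though classical, result; second, the converse uses the fact that the tangent cone of a polytope at a vertex is generated by the directions of the edges incident to that vertex, a standard fact from polytope theory (e.g.\ Ziegler) that should be cited rather than treated as self-evident. With those two references supplied, your argument is a complete and self-contained proof of a statement the paper leaves entirely to the literature.
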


\begin{teo}
        Let $\mathscr{P}\subseteq \R^n$ be the independence polytope of a matroid with $n$ elements. Then:
    \begin{itemize}
        \item All the vertices of $\mathscr{P}$ have $0/1$ coordinates.
        \item All the edges of $\mathscr{P}$ are of the form $e_i-e_j$, $e_i$ or $-e_i$.
    \end{itemize}
\end{teo}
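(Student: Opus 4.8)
The first assertion is immediate and deserves no more than a sentence: the vertices of a polytope presented as the convex hull of a finite set of points form a subset of that set, so every vertex of $\mathscr{P}$ is one of the generators $e_I$ with $I$ independent, and each such $e_I$ has $0/1$ coordinates. The content of the statement is therefore the edge description, and the plan is to translate it into a purely combinatorial claim about the matroid and then settle that claim with the exchange axioms.

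Suppose $[e_I,e_J]$ is an edge of $\mathscr{P}$ with $I\neq J$ independent, and write $C=I\cap J$, $A=I\setminus J$, $B=J\setminus I$, so that the edge direction is $e_J-e_I=e_B-e_A$, supported on the symmetric difference $D=A\sqcup B$. The three admissible directions correspond exactly to $\max(|A|,|B|)\le 1$, so it suffices to rule out $\max(|A|,|B|)\ge 2$. The reduction I would use is the following: if one can exhibit independent sets $I',J'$ with $e_{I'}+e_{J'}=e_I+e_J$ and $\{I',J'\}\neq\{I,J\}$, a contradiction results. Indeed, the midpoint $m=\tfrac12(e_I+e_J)=\tfrac12(e_{I'}+e_{J'})$ lies on the edge, and since a face is an extreme subset of the polytope, writing $m$ as a convex combination of the vertices $e_{I'},e_{J'}\in\mathscr{P}$ forces both of them to lie on $[e_I,e_J]$; as these are $0/1$ vectors with $e_{I'}+e_{J'}=e_I+e_J$, this would give $\{I',J'\}=\{I,J\}$, a contradiction.

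The equation $e_{I'}+e_{J'}=e_I+e_J$ says precisely that $I'\cap J'=C$ and that $\{I',J'\}=\{C\cup S,\,C\cup T\}$ for some partition $D=S\sqcup T$. Passing to the contraction $M/C$ (in which $A$ and $B$ are disjoint independent sets, and where $C\cup S$ is independent iff $S$ is independent in $M/C$), the task becomes: whenever $\max(|A|,|B|)\ge 2$, repartition $D=A\sqcup B$ into two independent sets $\{S,T\}\neq\{A,B\}$. I would handle this by cases. If one of $A,B$ is empty, the other is independent of size $\ge 2$ and may simply be split in two. If both are nonempty with $|A|\neq|B|$, the independence augmentation axiom moves a single element from the larger side to the smaller one, producing a partition of a different shape. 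The remaining case $|A|=|B|\ge 2$ is the delicate one: if some $a\in A$ satisfies that $B\cup\{a\}$ is independent one again moves a single element, and otherwise $A\subseteq\overline{B}$, so $A$ and $B$ are two disjoint bases of the same flat and the symmetric exchange property for bases supplies an element swap yielding a new independent partition of the same support.

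This balanced case $|A|=|B|\ge 2$ is where I expect the main difficulty: naive augmentation fails because the two sides have equal size, and one must invoke the stronger symmetric exchange property, after first recognizing that the obstruction to moving a single element forces $A$ and $B$ to span a common flat. Once the combinatorial claim is established in all cases, the reduction above shows that every edge satisfies $\max(|A|,|B|)\le 1$, leaving exactly the directions $e_i-e_j$ (when $|A|=|B|=1$), $e_j$ (when $A=\emptyset$ and $|B|=1$) and $-e_i$ (when $B=\emptyset$ and $|A|=1$), as claimed.
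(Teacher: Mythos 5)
The paper never proves this statement: it is quoted as a known description of independence polytopes, with a citation to Schrijver, so there is no in-paper proof to compare against, and your argument has to stand on its own. It does. The vertex claim is, as you say, immediate. Your edge argument is the standard midpoint technique for $0/1$ polytopes (the same style of argument used in the GGMS characterization of basis polytopes), correctly adapted: if $e_{I'}+e_{J'}=e_I+e_J$ with $I',J'$ independent and $\{I',J'\}\neq\{I,J\}$, then the extreme-set property of faces forces $e_{I'},e_{J'}$ onto the segment $[e_I,e_J]$, and a $0/1$ point of that segment must be an endpoint, giving the contradiction. The reduction to the contraction $M/C$ is legitimate because $C$ is independent, so $S\mapsto C\cup S$ preserves independence both ways, and your case analysis on $(|A|,|B|)$ is exhaustive: splitting when one side is empty, augmentation when the sizes differ, single-element transfer when some $a\in A$ keeps $B\cup\{a\}$ independent, and the balanced obstructed case where $A\subseteq\overline{B}$ forces $\operatorname{cl}(A)=\operatorname{cl}(B)$ (equal ranks, nested flats), so $A$ and $B$ are disjoint bases of a common flat. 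There the symmetric exchange you invoke does produce $S=A-a+b$, $T=B-b+a$ both independent, and $|A|\geq 2$ guarantees $\{S,T\}\neq\{A,B\}$, which is exactly why the direction $e_i-e_j$ survives while longer differences cannot. The one point to flag is that symmetric (simultaneous) basis exchange is not one of the matroid axioms but a genuine theorem, due to Brualdi; your proof is correct but should cite it rather than present it as part of "the exchange axioms." The trade-off between the two routes is clear: the paper's citation is economical for a classical fact, while your argument makes the statement self-contained and exhibits the technique --- reduction of edge descriptions to exchange properties --- that underlies the cited characterizations in the first place.
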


A \textit{generalized permutohedron} is a polytope $\mathscr{P}\subseteq \R^n$ that has all of its edges parallel to $e_i-e_j$. It is evident that basis polytopes of matroids are a subfamily of generalized permutohedra (precisely those that have vertices with 0/1 coordinates).

\begin{defi}
    Let $M$ be a matroid of rank $k$ and cardinality $n$. We defined the \textit{lifted independence polytope} of $M$ as the polytope $\widetilde{\mathscr{P}}_I(M)\subseteq \R^{n+1}$ given by:
        \[ \widetilde{\mathscr{P}}_I(M) := \text{convex hull} \{ (e_I, k - \rk(I)) : I\subseteq E \text{ is independent}\}.\]
\end{defi}

It is evident that $\mathscr{P}_I$ and $\widetilde{\mathscr{P}}_I(M)$ are integrally equivalent, in fact the map $\mathscr{P}_I(M) \to \widetilde{P}_I(M)$ is a unimodular equivalence. In particular they have the same relative volume and the same Ehrhart polynomial. Moreover, it is straightforward to prove the following result:

\begin{teo}
    For every matroid $M$, the lifted independence polytope $\widetilde{\mathscr{P}}_I(M)$ is a generalized permutohedron. 
\end{teo}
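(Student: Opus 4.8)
The plan is to show that every edge of the lifted independence polytope $\widetilde{\mathscr{P}}_I(M)$ is parallel to some $e_i - e_j$ in $\R^{n+1}$. Writing coordinates as $(x_1,\ldots,x_n, x_{n+1})$, the key observation is that the lifting was engineered precisely so that the ``defect'' coordinate $x_{n+1} = k - \rk(I)$ absorbs exactly what is needed to straighten out the three edge types permitted for an ordinary independence polytope. By the preceding theorem, every edge of $\mathscr{P}_I(M)$ is parallel to $e_i - e_j$, $e_i$, or $-e_i$; I would track what each of these becomes under the lifting map $x \mapsto (x, k - \rk(x))$ restricted to the vertices, and verify that each lifts to an $e_a - e_b$ direction in the larger space.

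First I would fix the lifting map $\varphi \colon \R^n \to \R^{n+1}$ on vertices by $\varphi(e_I) = (e_I, k-\rk(I))$ and recall from the excerpt that this is a unimodular (integral) equivalence onto its image, so it carries vertices to vertices and edges to edges bijectively, preserving the combinatorial structure. Hence it suffices to check edge directions. An edge of type $e_i - e_j$ connects two independent sets of the same cardinality (an exchange), so their ranks are equal, the last coordinate is unchanged, and the lifted edge direction is simply $(e_i - e_j, 0) = e_i - e_j$ in $\R^{n+1}$, which is already of the required form. The interesting edges are those of type $\pm e_i$, which connect an independent set $I$ to $I \cup \{i\}$ (differing by one element, hence by one in rank): I would compute that the lifted edge direction is $(e_i, -1) = e_i - e_{n+1}$, again of the form $e_a - e_b$. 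This is the heart of the construction — the last coordinate drops by exactly one precisely when a single element is added, converting the ``bad'' directions $\pm e_i$ into good differences involving the new coordinate $e_{n+1}$.

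To make this rigorous rather than purely pictorial, I would argue as follows. The vertices of $\widetilde{\mathscr{P}}_I(M)$ are exactly the $\varphi(e_I)$ for independent $I$, and every edge joins two such vertices $\varphi(e_I)$ and $\varphi(e_{I'})$ whose preimages $e_I, e_{I'}$ form an edge of $\mathscr{P}_I(M)$. For each of the three allowed edge types I read off $\rk(I') - \rk(I)$: it is $0$ in the $e_i - e_j$ case and $\pm 1$ in the $\pm e_i$ cases, since adding or removing a single element changes the rank of an independent set by exactly one. Substituting into $\varphi(e_{I'}) - \varphi(e_I) = (e_{I'} - e_I,\ \rk(I) - \rk(I'))$ gives the three displayed directions $e_i - e_j$, $e_i - e_{n+1}$, and $e_{n+1} - e_i$, each manifestly parallel to a difference of two canonical basis vectors of $\R^{n+1}$.

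I do not expect a serious obstacle here, which matches the excerpt's description of the result as ``straightforward''; the one point deserving care is the justification that $\varphi$ sends edges to edges, i.e. that I may legitimately reduce the whole question to checking directions on the vertex images. This follows because $\varphi$ is the restriction of an affine unimodular map, so it is a bijective affine isomorphism onto its image and therefore preserves the face lattice; the edge directions of the image are exactly the images of the edge directions of $\mathscr{P}_I(M)$. With that reduction in hand the verification is the short case analysis above, and the conclusion is that all edges of $\widetilde{\mathscr{P}}_I(M)$ are parallel to vectors $e_a - e_b$, which is precisely the definition of a generalized permutohedron.
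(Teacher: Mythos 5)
Your proposal is correct and follows essentially the same route as the paper: both reduce to the edges of $\mathscr{P}_I(M)$ via the (unimodular) affine equivalence, then run the same two-case analysis showing the lifted edge directions are $(e_i - e_j, 0)$ and $(e_i, -1) = e_i - e_{n+1}$. If anything, you are slightly more careful than the paper in justifying that edges of $\widetilde{\mathscr{P}}_I(M)$ correspond to edges of $\mathscr{P}_I(M)$, a step the paper asserts without comment.
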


\begin{proof}
    Let us pick two adjacent vertices $v$ and $w$ in $\mathscr{\widetilde{P}}_I(M)$. They are of the form:
        \begin{align*}
            v &= (e_{I_1}, k - \rk(I_1),\\
            w &= (e_{I_2}, k - \rk(I_2)),
        \end{align*}
    for some independent sets $I_1$ and $I_2$ of $M$. Moreover, the vertices $e_{I_1}$ and $e_{I_2}$ are adjacent in $\mathscr{P}_I(M)$. There are two cases:
    \begin{itemize}
        \item If $|I_1| = |I_2|$, then $\rk(I_1) = |I_1| = |I_2| = \rk(I_2)$. Since $e_{I_1}$ and $e_{I_2}$ are adjacent in $\mathscr{P}_I(M)$ the only possibility is that $e_{I_1} - e_{I_2} = e_i - e_j$ for some $i,j$. In particular, $v - w = (e_i - e_j, 0)$. 
        \item If $|I_1| \neq |I_2|$, assume without loss of generality that $|I_1| < |I_2|$. The condition of $e_{I_1}$ and $e_{I_2}$ being adjacent in $\mathscr{P}_I(M)$ implies that $e_{I_2} - e_{I_1} = e_i$ for some $i$. This says that $I_2 = I_1 \sqcup \{i\}$. In particular $\rk(I_2) = \rk(I_1)+1$, and then:
            \[ v - w = (e_i, -1),\]
        which has the desired form.\qedhere
    \end{itemize}
\end{proof}

\begin{obs}
    This result was stated implicitly in \cite{ardila}. We include it explicitly here to simplify future referencing and motivate the main results. We want to emphasize that $\mathscr{P}(M)$ and $\widetilde{\mathscr{P}}_I(M)$ are \textit{not} $\mathcal{Y}$-generalized permutohedra when $M$ is connected and of rank and corank greater than 1.
\end{obs}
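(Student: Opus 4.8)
The plan is to reduce the claim to the known edge structure of the ordinary independence polytope, which the preceding theorem already describes. Since a generalized permutohedron is exactly a polytope all of whose edges are parallel to some $e_a - e_b$, it suffices to understand the edge directions of $\widetilde{\mathscr{P}}_I(M)$. The crucial observation I would make first is that the lifting map is in fact \emph{affine} on $\mathscr{P}_I(M)$: because every independent set $I$ satisfies $\rk(I) = |I| = \sum_i (e_I)_i$, the extra coordinate $k - \rk(I)$ agrees with the affine functional $k - \sum_i x_i$ evaluated at $x = e_I$. Thus the vertex assignment $e_I \mapsto (e_I, k - \rk(I))$ is merely the restriction to vertices of the global affine map
\[ \varphi \colon \R^n \to \R^{n+1}, \qquad x \longmapsto \left(x, \; k - \sum_{i=1}^n x_i\right). \]
Since $\varphi$ is a unimodular embedding whose inverse is the projection onto the first $n$ coordinates, it carries the face lattice of $\mathscr{P}_I(M)$ isomorphically onto that of $\widetilde{\mathscr{P}}_I(M)$, and it sends an edge direction $v$ to $(v, -\sum_i v_i)$.

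The second step is to invoke the characterization theorem for the edges of the independence polytope, which states that every edge of $\mathscr{P}_I(M)$ is parallel to $e_i - e_j$, to $e_i$, or to $-e_i$. Applying $\varphi$ to each of these three directions, and writing $e_1, \dots, e_{n+1}$ for the standard basis of $\R^{n+1}$, I would compute
\[ e_i - e_j \longmapsto (e_i - e_j, 0) = e_i - e_j, \qquad e_i \longmapsto (e_i, -1) = e_i - e_{n+1}, \qquad -e_i \longmapsto (-e_i, 1) = e_{n+1} - e_i. \]
In all three cases the image is again of the form $e_a - e_b$, now in $\R^{n+1}$, which is precisely the defining condition for a generalized permutohedron. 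This completes the argument.

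The proof is short and I do not expect a genuine obstacle; the single point requiring care is the justification that the lift is honestly affine rather than defined only vertex-by-vertex. This is exactly where the restriction to independent sets is used, since $\rk(I) = |I|$ forces $k - \rk(I) = k - \sum_i (e_I)_i$; the same formula would break down over arbitrary subsets, where $k - \rk(I)$ is no longer affine in $e_I$. A reader who prefers to avoid the language of unimodular maps can instead argue directly on a pair of adjacent vertices $(e_{I_1}, k - \rk(I_1))$ and $(e_{I_2}, k - \rk(I_2))$, splitting into the cases $|I_1| = |I_2|$ and $|I_1| \neq |I_2|$ and using adjacency in $\mathscr{P}_I(M)$ to pin down the difference $e_{I_1} - e_{I_2}$; the resulting bookkeeping matches the three computations displayed above.
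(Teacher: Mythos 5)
Your proof is correct, and in substance it is the paper's proof: both arguments rest on exactly the two ingredients you isolate, namely the edge characterization of $\mathscr{P}_I(M)$ (edges parallel to $e_i-e_j$, $e_i$, or $-e_i$) and the identity $\rk(I)=|I|$ for independent sets. The difference is one of packaging. The paper argues directly on a pair of adjacent vertices $(e_{I_1},k-\rk(I_1))$, $(e_{I_2},k-\rk(I_2))$ and splits into the cases $|I_1|=|I_2|$ and $|I_1|\neq|I_2|$ --- precisely the variant you sketch in your closing paragraph. You instead observe first that the lift is the restriction of the global affine unimodular map $\varphi(x)=\left(x,\, k-\sum_{i=1}^n x_i\right)$ and then push the three possible edge directions forward through its linear part. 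This buys a genuine gain in rigor: the paper's case analysis needs the fact that adjacency in $\widetilde{\mathscr{P}}_I(M)$ descends to adjacency in $\mathscr{P}_I(M)$, which it justifies only by declaring the unimodular equivalence ``evident'' just before the theorem; your verification that $\varphi$ is honestly affine (exactly because $k-\rk(I)=k-\sum_i (e_I)_i$ holds on independent sets, and would fail on arbitrary subsets) is the missing justification for that step. One caveat: the remark you were given also asserts that $\mathscr{P}(M)$ and $\widetilde{\mathscr{P}}_I(M)$ are \emph{not} $\mathcal{Y}$-generalized permutohedra when $M$ is connected with rank and corank greater than $1$; neither your argument nor the paper's proof establishes this --- both treatments defer to the signed $\beta$-invariant argument cited from \cite{ardila} --- so that clause rests on the citation alone in either version.
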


\section{The main results}

In \cite{nanli}, Nan Li introduced half open hypersimplices $\Delta'_{k,n}$:
  \begin{equation} \label{defhalfopen}
  \Delta'_{k,n} := \left\{x\in [0,1]^{n-1} : k-1 < \sum_{i=1}^{n-1} x_i \leq k \right\}.\end{equation}
for $k>1$ and $\Delta'_{1,n}:=\Delta_{1,n}$. This was done in the context of studying the Ehrhart $h^*$-polynomial of the hypersimplex $\Delta_{k,n}$. 

The Ehrhart polynomial of $\Delta'_{k,n}$ can be calculated in terms of Ehrhart polynomial of two hypersimplices.

\begin{prop}\label{ehrharthalfopen}
    If $1 < k < n - 1$, then:
        \[ i(\Delta'_{k,n}, t) = i(\Delta_{k,n}, t) - i(\Delta_{k-1,n-1}, t).\]
\end{prop}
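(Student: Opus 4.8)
The plan is to reduce everything to counting lattice points and to exploit the fact that $\Delta_{k,n}$ lives inside the affine hyperplane $\sum_{i=1}^n x_i = k$, so that one coordinate is redundant. Concretely, I would first consider the projection $\pi\colon \R^n \to \R^{n-1}$ that forgets the last coordinate. On the dilate $t\Delta_{k,n}$, a lattice point is an integer vector $(x_1,\dots,x_n)$ with $0\le x_i\le t$ and $\sum_{i=1}^n x_i = tk$; since $x_n = tk - \sum_{i=1}^{n-1} x_i$ is then forced to be an integer, $\pi$ restricts to a bijection between $t\Delta_{k,n}\cap\Z^n$ and the set of integer vectors $(x_1,\dots,x_{n-1})\in [0,t]^{n-1}$ satisfying $t(k-1)\le \sum_{i=1}^{n-1} x_i\le tk$, the two inequalities being exactly the translation of $0\le x_n\le t$. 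In other words, $i(\Delta_{k,n},t)$ counts the lattice points of the closed slab $t(k-1)\le \sum x_i\le tk$ inside $[0,t]^{n-1}$.

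Next I would compare this with $t\Delta'_{k,n}\cap\Z^{n-1}$. By definition the half-open hypersimplex imposes the same constraints except that the lower inequality becomes strict: $t(k-1) < \sum_{i=1}^{n-1} x_i\le tk$. Hence the two lattice-point sets differ exactly by those integer vectors lying on the bottom face $\sum_{i=1}^{n-1} x_i = t(k-1)$ of the cube $[0,t]^{n-1}$, and we obtain
\[ i(\Delta_{k,n},t) - i(\Delta'_{k,n},t) = \#\Big\{(x_1,\dots,x_{n-1})\in [0,t]^{n-1}\cap \Z^{n-1} : \sum_{i=1}^{n-1} x_i = t(k-1)\Big\}. \]

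Finally I would identify the right-hand side. The displayed set is precisely $t\Delta_{k-1,n-1}\cap \Z^{n-1}$, since $\Delta_{k-1,n-1} = \{x\in[0,1]^{n-1} : \sum_{i=1}^{n-1} x_i = k-1\}$ dilates to the slice $\sum x_i = t(k-1)$ of $[0,t]^{n-1}$. Therefore the difference above equals $i(\Delta_{k-1,n-1},t)$, and rearranging yields the claimed identity.

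I expect the only delicate points to be bookkeeping: checking that $\pi$ is genuinely a lattice-preserving bijection (that is, that forgetting $x_n$ is unimodular on the hyperplane $\sum x_i = tk$, which holds because the last coordinate is recovered as an integer affine combination of the others), and keeping careful track of which bounding hyperplane is open and which is closed, so that the inclusion--exclusion lands on $\Delta_{k-1,n-1}$ rather than on the top face. The hypothesis $1<k<n-1$ serves to keep all three polytopes in their nondegenerate range and, in particular, to justify using the strict-inequality definition of $\Delta'_{k,n}$; the geometric argument itself is essentially a one-line slicing of the cube once the projection is set up.
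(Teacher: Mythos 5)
Your proof is correct and takes essentially the same approach as the paper: the paper's (much terser) argument also views $\Delta_{k,n}$, after forgetting one coordinate, as the slab $k-1\le\sum_{i=1}^{n-1}x_i\le k$ inside $[0,1]^{n-1}$, and observes that passing to $\Delta'_{k,n}$ erases exactly the face $\sum_{i=1}^{n-1}x_i=k-1$, which is a copy of $\Delta_{k-1,n-1}$. Your write-up simply makes explicit the lattice-point bookkeeping (the unimodularity of the forgetful projection on each dilate and the careful tracking of the strict inequality) that the paper leaves implicit.
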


\begin{proof}
    Observe $\Delta_{k,n}$ can be seen as the set of points in $[0,1]^{n-1}$ that have sum of coordinates in the interval $[k-1,k]$. If we exclude the possibility of the sum of coordinates being equal to $k-1$, then we are essentially erasing the hypersimplex $\Delta_{k-1,n-1}$.
\end{proof}

A useful fact is that the description given in equation \eqref{defhalfopen} of these objects shows that they are polytopes with some missing faces. Thus we can use them to tile a polytope in a disjoint fashion.\\

Explicitly, for the uniform matroid $U_{k,n}$, the independence polytope is given by:
    \[ \mathscr{P}_I(U_{k,n}) = \left\{ x \in [0,1]^n : \sum_{i=1}^n x_i \leq k \right\}.\]

It is evident that:
    \begin{equation}\label{decompo}
    \mathscr{P}_I(U_{k,n}) = \Delta_{1,n+1}' \sqcup \Delta'_{2,n+1} \sqcup \cdots \sqcup \Delta'_{k,n+1},\end{equation}
where the symbol $\sqcup$ stands for disjoint union. Hence, if we prove that each of these half open hypersimplices is Ehrhart positive, we can conclude so for the independence matroid polytope of the uniform matroid $U_{k,n}$.

This approach can be extended to all polytopes that can be tiled using (dilations of) half-open hypersimplices. The author leaves as a question what polytopes can be tiled in this way.

Recall from \cite{ferroni} the definition of weighted Lah number.

\begin{defi}
	Let $\pi$ be a partition of the set $\{1,\ldots,n\}$ into $m$ linearly ordered blocks. We define the \textit{weight of $\pi$} by the following formula:
		\[ w(\pi) := \sum_{b\in\pi} w(b),\]
	where $w(b)$ is the number of elements in $b$ that are smaller (as positive integers) than the first element in $b$.
\end{defi}

\begin{defi}
	We define the \textit{weighted Lah Numbers} $W(\ell,n,m)$ as the number of partitions of weight $\ell$ of $\{1,\ldots,n\}$ into exactly $m$ linearly ordered blocks. We call $\mathscr{W}(\ell,n,m)$ the family of all such partitions.
\end{defi}

\begin{ej}\label{ejemplito}
	These are the partitions of the set $\{1,2,3\}$ into $2$ ordered blocks:
		\[ \{(1,2),(3)\}, \{(2,1),(3)\},\]
		\[ \{(1,3),(2)\}, \{(3,1),(2)\},\]
		\[ \{(2,3),(1)\}, \{(3,2),(1)\}.\]
	For each of them, we have:
		\[ w(\{(1,2),(3)\}) = 0 + 0 = 0,\;\; w(\{(2,1),(3)\}) = 1 + 0 = 1,\]
		\[ w(\{(1,3),(2)\}) = 0 + 0 = 0,\;\;  w(\{(3,1),(2)\}) = 1 + 0 = 1,\]
		\[ w(\{(2,3),(1)\}) = 0 + 0 = 0,\;\;  w(\{(3,2),(1)\}) = 1 + 0 = 1.\]
	Note that there are exactly $3$ of these partitions of weight $0$ and exactly $3$ of weight $1$. This says that $W(0,3,2)=3$ and $W(1,3,2)=3$.
\end{ej}

In \cite[Proposition 3.10]{ferroni} a recurrence for $W(\ell,n,m)$ is stated. Here we will need one that is very similar.

\begin{prop}
    \[W(\ell,n,m) = (n-1) W(\ell,n-1,m) + \sum_{j=0}^{n-1} \binom{n-1}{j} j! W(\ell-j,n-1-j,m-1).\]
\end{prop}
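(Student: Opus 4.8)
The plan is to give a direct combinatorial proof by conditioning on the position of the largest element $n$ within the block that contains it. The crucial preliminary observation, which I would record first, is that since $n$ is the maximum of $\{1,\dots,n\}$, it is never smaller than the first element of any block; consequently $n$ never contributes to the weight $w(\pi)$ of any partition $\pi\in\mathscr{W}(\ell,n,m)$. This is what makes the weight bookkeeping tractable. The dichotomy—whether $n$ \emph{is} or \emph{is not} the first element of its block—is clearly exhaustive and disjoint, and I expect the two cases to produce the two summands on the right-hand side, respectively.

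In the first case, suppose $n$ is not the first element of its block. Then that block has at least two elements, so deleting $n$ leaves a partition $\pi'$ of $\{1,\dots,n-1\}$ into the same number $m$ of linearly ordered blocks. Because $n$ contributes to no weight and the first element of the affected block is unchanged, we have $w(\pi')=\ell$, so $\pi'\in\mathscr{W}(\ell,n-1,m)$. Conversely, I would count the ways to reinsert $n$ into a fixed $\pi'$ in a non-leading position: a linearly ordered block of size $s$ offers exactly $s$ such positions (all slots except the one preceding its first element), and since the block sizes of $\pi'$ sum to $n-1$, there are exactly $n-1$ admissible reinsertions, each preserving both $m$ and the weight. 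This bijection between such $\pi$ and pairs (partition in $\mathscr{W}(\ell,n-1,m)$, non-leading slot) accounts for the term $(n-1)\,W(\ell,n-1,m)$.

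In the second case, suppose $n$ is the first element of its block, so that block has the form $(n,c_1,\dots,c_j)$ for some $j$ with $0\le j\le n-1$, where $c_1,\dots,c_j$ are the remaining elements in their linear order. Each of these $j$ elements is smaller than $n$, so this block contributes exactly $j$ to the weight, and deleting the entire block leaves a partition of the remaining $n-1-j$ elements into $m-1$ ordered blocks of weight $\ell-j$. To reconstruct such a $\pi$, I would choose the $j$ companions of $n$ in $\binom{n-1}{j}$ ways, linearly order them after $n$ in $j!$ ways, and choose the partition of the remaining elements in $W(\ell-j,n-1-j,m-1)$ ways. Summing over all admissible $j$ gives $\sum_{j=0}^{n-1}\binom{n-1}{j}\,j!\,W(\ell-j,n-1-j,m-1)$, and adding the two cases yields the stated recurrence.

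The main obstacle I anticipate is not the bijection itself but the weight bookkeeping: one must verify that reinserting $n$ in a non-leading slot genuinely leaves \emph{every} block's weight unchanged (which relies on the maximality of $n$ together with the first element of the block staying fixed), and that the slot count is exactly the sum of block sizes, namely $n-1$. I would also remark on the degenerate endpoint $j=n-1$, which forces $m=1$ and an empty remaining partition under the convention $W(0,0,0)=1$, to confirm the boundary terms behave correctly; these follow automatically from the counting.
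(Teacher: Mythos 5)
Your proof is correct and follows essentially the same approach as the paper's: both condition on whether the element $n$ is the first element of its block, recovering the term $(n-1)\,W(\ell,n-1,m)$ from the non-leading case and the sum over $j$ from the leading case. Your write-up is in fact more careful than the paper's, since you explicitly verify the weight preservation under reinsertion and the slot count summing to $n-1$, details the paper leaves implicit.
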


\begin{proof}
    Every $\pi\in\mathscr{W}(\ell,n,m)$ has the number $n$ inside a block. If this number is \textit{not} the first element of its block, this means that if we remove it from $\pi$ we end up getting an element of $\mathscr{W}(\ell,n-1,m)$. Analogously, we can pick an element of $\mathscr{W}(\ell,n-1,m)$ and reconstruct an element of $\mathscr{W}(\ell,n,m)$ by adjoining the element $n$ in such a way that it is not the first element of a block. There are $n-1$ possibilities of where to put the number $n$ to get an element of $\mathscr{W}(\ell,n,m)$. So we get the first summand.
	
	The remaining cases to consider are those on which $n$ is the first element of its block. In this case we choose $j$ elements to be in this block, and in every possible order of these elements, the block will always have weight $j$. So the remaining $n-j-1$ elements will have to be arranged in $m-1$ blocks of total weight $\ell-j$.
\end{proof}

\begin{coro}\label{corito}
    For each $2\leq m \leq n$ and $0\leq \ell\leq n-m$ one has:
    \[W(\ell,n,m) > (n-1) W(\ell,n-1,m).\]
\end{coro}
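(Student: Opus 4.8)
The plan is to read the inequality straight off the recurrence in the preceding proposition. Subtracting the first summand, the assertion $W(\ell,n,m) > (n-1)W(\ell,n-1,m)$ is equivalent to
\[\sum_{j=0}^{n-1} \binom{n-1}{j} j!\, W(\ell-j,n-1-j,m-1) > 0.\]
Since each weighted Lah number counts a set of partitions, it is nonnegative, and the binomial and factorial factors are positive; thus the right-hand side is a sum of nonnegative terms. Hence it suffices to exhibit a single index $j$ for which the corresponding summand is \emph{strictly} positive.

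The natural choice is $j=0$, whose summand equals $\binom{n-1}{0}\,0!\, W(\ell,n-1,m-1) = W(\ell,n-1,m-1)$. I would then show that this number is strictly positive under the hypotheses $2\leq m\leq n$ and $0\leq\ell\leq n-m$. These translate into $1\leq m-1\leq n-1$ together with $0\leq \ell\leq (n-1)-(m-1)$, so that $\ell$ falls in the admissible range of weights for partitions of an $(n-1)$-element set into $m-1$ linearly ordered blocks.

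The one point requiring care — the main, if modest, obstacle — is to confirm that $W(\ell',n',m')>0$ throughout its natural range $1\leq m'\leq n'$ and $0\leq \ell'\leq n'-m'$, rather than merely being nonnegative. I would establish this by an explicit construction: place the elements $1,\ldots,m'-1$ into $m'-1$ singleton blocks, each of weight $0$, and place the remaining $n'-m'+1$ elements $m',\ldots,n'$ into a single ordered block. Taking as the first entry of that block its $(\ell'+1)$-th smallest element leaves exactly $\ell'$ smaller elements behind it, so this block, and hence the whole partition, has weight precisely $\ell'$. This yields an element of $\mathscr{W}(\ell',n',m')$ for every $0\leq\ell'\leq n'-m'$, proving the desired positivity. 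Applying it with $(\ell',n',m')=(\ell,n-1,m-1)$ gives $W(\ell,n-1,m-1)>0$, which forces the sum to be strictly positive and thereby establishes the corollary.
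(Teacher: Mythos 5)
Your proof is correct, and it follows the same overall strategy as the paper: read the inequality off the recurrence and exhibit one strictly positive summand of $\sum_{j=0}^{n-1}\binom{n-1}{j}j!\,W(\ell-j,n-1-j,m-1)$. The only real difference is which summand is chosen. The paper takes $j=\ell$, whose term is $\binom{n-1}{\ell}\ell!\,W(0,n-1-\ell,m-1)$; positivity of that factor is then immediate, since weight-zero partitions into linearly ordered blocks are exactly those in which each block starts with its minimum, so $W(0,n-1-\ell,m-1)$ is an unsigned Stirling number of the first kind, nonzero because $\ell\le n-m$ gives $n-1-\ell\ge m-1\ge 1$. You instead take $j=0$, whose term is $W(\ell,n-1,m-1)$, and this obliges you to prove the stronger fact that weighted Lah numbers are strictly positive throughout their whole natural range $0\le\ell'\le n'-m'$ --- which you do correctly, with a clean explicit construction ($m'-1$ singletons plus one long block whose first entry is its $(\ell'+1)$-th smallest element), and with the right parameter checks ($1\le m-1\le n-1$ and $0\le\ell\le(n-1)-(m-1)$). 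Your version is slightly more self-contained, as it does not lean on the identification with Stirling numbers, and the positivity lemma you establish is more general and of independent use; the paper's choice of $j=\ell$ is the one that makes positivity visible with no auxiliary lemma at all. Both arguments are complete and correct.
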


\begin{proof}
    From the preceding Proposition, it suffices to show that at least one term of the sum:
    \[\sum_{j=0}^{n-1} \binom{n-1}{j} j! W(\ell-j,n-1-j,m-1),\]
    is nonzero. Notice that taking $j=\ell$ in the above sum yields the term:
    \[ \binom{n-1}{\ell} \ell! W(0,n-1-\ell, m-1).\]
    Notice that $W(0,n-1-\ell, m-1) > 0$ under the constraints on $\ell$, $n$, and $m$. In fact it is equal to the unsigned Stirling number of the first kind: ${n-1-\ell} \brack {m-1}$.
\end{proof}

\begin{teo}
    Let us denote $i(\Delta_{k,n}',t)$ the Ehrhart polynomial of $\Delta_{k,n}'$. Then:
        \[ [t^m] i(\Delta'_{k,n},t) > 0 \text{ for all } 1\leq m \leq n-1.\]
    Also, the constant term is $1$ for $k=1$ and $0$ for $k>1$.
\end{teo}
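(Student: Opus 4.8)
The plan is to reduce the statement to the two ingredients already assembled, namely the difference formula of Proposition \ref{ehrharthalfopen} and the expansion of the Ehrhart coefficients of a closed hypersimplex through the weighted Lah numbers $W(\ell,n,m)$. First I would dispose of the extreme values of $k$, where Proposition \ref{ehrharthalfopen} does not apply. For $k=1$ we have $\Delta'_{1,n}=\Delta_{1,n}$, the dilated standard simplex, whose Ehrhart polynomial is $\binom{t+n-1}{n-1}=\frac{1}{(n-1)!}\prod_{i=1}^{n-1}(t+i)$; this has strictly positive coefficients and constant term $1$. For $k=n-1$ the central symmetry $x\mapsto \mathbf{1}-x$ of the cube carries $\Delta'_{n-1,n}$ onto the half-open simplex $\{0\le \sum y_i<1\}$, so that $i(\Delta'_{n-1,n},t)=\binom{t+n-2}{n-1}=\frac{t}{(n-1)!}\prod_{i=1}^{n-2}(t+i)$, again with positive coefficients and constant term $0$, matching the claim.

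For the constant term in general I would argue directly from Proposition \ref{ehrharthalfopen}: since the constant term of the Ehrhart polynomial of any closed integral polytope equals $1$, evaluating $i(\Delta'_{k,n},t)=i(\Delta_{k,n},t)-i(\Delta_{k-1,n-1},t)$ at $t=0$ gives $1-1=0$ for every $k>1$, while the $k=1$ case was settled above. This immediately yields the stated value of the constant term.

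The heart of the argument is positivity of $[t^m]i(\Delta'_{k,n},t)$ for $1\le m\le n-1$ and $1<k<n-1$. Here I would invoke the expansion from \cite{ferroni} writing $(n-1)!\,[t^m]i(\Delta_{k,n},t)$ through the numbers $W(\ell,n,m+1)$, together with the analogous expansion for $i(\Delta_{k-1,n-1},t)$, which lives one dimension lower and therefore carries the normalizing factor $(n-2)!$ rather than $(n-1)!$. Substituting both into the difference of Proposition \ref{ehrharthalfopen} and clearing denominators, the ratio $(n-1)!/(n-2)!=n-1$ appears exactly in front of the ground-set-$(n-1)$ weighted Lah numbers. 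The decisive step is then to reorganize this difference, using the weighted Lah recurrence established above, into a sum of terms of the shape
\[ \big(\text{nonnegative coefficient}\big)\cdot\big(W(\ell,n,m+1)-(n-1)\,W(\ell,n-1,m+1)\big),\]
each factor $W(\ell,n,m+1)-(n-1)W(\ell,n-1,m+1)$ being strictly positive by Corollary \ref{corito}, whose hypotheses $2\le m+1\le n$ and $0\le \ell\le n-1-m$ hold precisely in our range (the terms violating the latter vanishing identically). Provided at least one coefficient, say that of the $\ell=0$ term, is positive, strict positivity of the whole coefficient follows.

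The main obstacle I anticipate is exactly this reorganization. The weight-$0$ coefficients of the two expansions are the Eulerian numbers $A(n-1,k-1)$ and $A(n-2,k-2)$, which do \emph{not} agree, so the difference cannot be matched term-by-term in a naive way. One must instead let the weighted Lah recurrence split each $W(\ell,n,m+1)$ into its principal part $(n-1)W(\ell,n-1,m+1)$ and a manifestly nonnegative remainder, verify that the principal parts are absorbed by the contribution of $\Delta_{k-1,n-1}$, and then track that the surviving coefficients of the remainders stay nonnegative throughout. Controlling these coefficients — rather than the positivity of the individual bracketed quantities, which Corollary \ref{corito} already delivers — is where the genuine bookkeeping lies.
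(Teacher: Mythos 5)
Your treatment of the peripheral parts is fine: the cases $k=1$ and $k=n-1$, and the evaluation at $t=0$ to get the constant term, are all correct (indeed your explicit handling of $k=n-1$ patches a small gap, since Proposition \ref{ehrharthalfopen} is stated only for $k<n-1$). But the core of the theorem --- positivity of $[t^m]i(\Delta'_{k,n},t)$ for $1<k<n-1$ --- is left genuinely open in your proposal. You correctly identify the obstacle: in the expansions $e_{k,n,m}=\frac{1}{(n-1)!}\sum_{\ell} W(\ell,n,m+1)A(m,k-\ell-1)$ and $e_{k-1,n-1,m}=\frac{1}{(n-2)!}\sum_{\ell} W(\ell,n-1,m+1)A(m,k-\ell-2)$, the Eulerian factors attached to corresponding values of $\ell$ do not match (they are $A(m,k-\ell-1)$ versus $A(m,k-\ell-2)$; your $A(n-1,k-1)$ and $A(n-2,k-2)$ have the wrong first argument, but the point stands). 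However, your proposed remedy --- ``reorganize via the weighted Lah recurrence and track that the surviving coefficients stay nonnegative'' --- is not an argument, and the natural way to execute it fails. Writing the normalized difference as
\[ \sum_{\ell} A(m,k-\ell-1)\bigl(W(\ell,n,m+1)-(n-1)W(\ell,n-1,m+1)\bigr) + (n-1)\sum_{\ell}\bigl(A(m,k-\ell-1)-A(m,k-\ell-2)\bigr)W(\ell,n-1,m+1),\]
the first sum is indeed positive by Corollary \ref{corito}, but the second has coefficients $A(m,k-\ell-1)-A(m,k-\ell-2)$ that turn negative as soon as $k-\ell-1$ passes the peak of the unimodal sequence $A(m,\cdot)$, so no sign conclusion follows. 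What you dismissed as ``bookkeeping'' is precisely where the proof lives, and it is missing.

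The paper resolves this with a one-line symmetry observation absent from your proposal: since $\Delta_{k,n}\cong\Delta_{n-k,n}$ and $\Delta_{k-1,n-1}\cong\Delta_{(n-1)-(k-1),n-1}=\Delta_{n-k,n-1}$ under $x\mapsto \mathbf{1}-x$, one has $e_{k,n,m}=e_{n-k,n,m}$ and $e_{k-1,n-1,m}=e_{n-k,n-1,m}$. After reflecting \emph{both} polytopes, the two expansions run over the same range $0\le\ell\le n-k-1$ and carry identical Eulerian factors $A(m,n-k-\ell-1)$, so the difference becomes, up to a positive normalization,
\[ \sum_{\ell=0}^{n-k-1}\Bigl(\tfrac{1}{n-1}W(\ell,n,m+1)-W(\ell,n-1,m+1)\Bigr)A(m,n-k-\ell-1),\]
which is positive term by term exactly by Corollary \ref{corito}. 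Note that the reflection you used only for the boundary case $k=n-1$ is the very device needed in general; applying it to both hypersimplices simultaneously, so that the Eulerian indices align, is the idea your proof lacks.
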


\begin{proof}
    Notice that $\Delta'_{1,n}=\Delta_{1,n}$, so the case $k=1$ is already settled in \cite{ferroni}. 
    
    From now on, consider $1\leq m \leq n-1$. We know from \cite{ferroni} that the coefficient of degree $m$ of $i(\Delta_{k,n},t)$ is given by:
    \begin{equation}\label{coefi}
        e_{k,n,m} := \frac{1}{(n-1)!} \sum_{\ell=0}^{k-1} W(\ell,n,m+1)A(m,k-\ell-1).
    \end{equation}
    Where $A$ stands for the Eulerian numbers. Proposition \ref{ehrharthalfopen} says that we have to prove:
    \[e_{k,n,m} > e_{k-1,n-1,m}.\] 
    Since the hypersimplices $\Delta_{k,n}$ and $\Delta_{n-k,n}$ are one a reflection of the other, we also know that $e_{k,n,m} = e_{n-k,n,m}$. This reasoning shows that $e_{k-1,n-1,m}=e_{n-k,n-1,m}.$
    So, it suffices to show that:
        \[ e_{n-k,n,m} > e_{n-k,n-1,m}.\]
    However, setting for simplicity $k' = n-k$ and using equation \eqref{coefi}, the last inequality is equivalent to:
        \[ \frac{1}{(n-1)!} \sum_{\ell=0}^{k'-1} W(\ell,n,m+1)A(m,k'-\ell-1) > \frac{1}{(n-2)!} \sum_{\ell=0}^{k'-1} W(\ell,n-1,m+1)A(m,k'-\ell-1)\]
    Which in turn is equivalent to prove that:
        \[ \sum_{\ell=0}^{k'-1} \left(\frac{1}{n-1} W(\ell,n,m+1) - W(\ell,n-1,m+1)\right) A(m,k'-\ell-1) > 0\]
    And as we saw in Corollary \ref{corito}, the term in the parentheses is positive, as desired.
\end{proof}

\begin{teo}
    The independence matroid polytope of the uniform matroid $U_{k,n}$, given by:
        \[\mathscr{P}_I (U_{k,n}) = \left\{ x\in [0,1]^n : \sum_{i=1}^n x_i \leq k\right\}.\]
    is Ehrhart positive. 
\end{teo}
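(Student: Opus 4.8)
The plan is to exploit the disjoint decomposition \eqref{decompo}, combine it with the additivity of the lattice-point count, and then feed in the positivity result established for half-open hypersimplices in the previous Theorem. First I would record the elementary but essential fact that the decomposition \eqref{decompo} survives dilation: for every positive integer $t$ one has
\[ t\,\mathscr{P}_I(U_{k,n}) = t\Delta'_{1,n+1} \sqcup t\Delta'_{2,n+1} \sqcup \cdots \sqcup t\Delta'_{k,n+1}, \]
because the pieces are separated according to the value of $\sum_{i=1}^n x_i$ lying in the consecutive half-open slabs $\bigl(t(j-1),\,tj\bigr]$ for $j>1$ (with the closed bottom slab $[0,t]$ corresponding to $j=1$), and these slabs partition the segment $[0,tk]$ for each fixed $t$. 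Since every lattice point of $t\,\mathscr{P}_I(U_{k,n})$ falls into exactly one slab, the lattice-point count is additive term by term, and passing to polynomials yields
\[ i(\mathscr{P}_I(U_{k,n}),t) = \sum_{j=1}^{k} i(\Delta'_{j,n+1},t). \]

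Next I would invoke the previous Theorem with $n$ replaced by $n+1$, so that it applies to each $\Delta'_{j,n+1}\subseteq[0,1]^n$. Since each such piece is full-dimensional in $\R^n$, the Theorem gives $[t^m]\, i(\Delta'_{j,n+1},t) > 0$ for all $1\le m\le n$, while its constant term is $1$ when $j=1$ and $0$ when $j>1$. Summing these statements over $j=1,\ldots,k$, every coefficient of degree $1\le m\le n$ is a sum of $k$ strictly positive numbers and is therefore strictly positive, whereas the constant term collapses to $1+0+\cdots+0 = 1$. Because $\mathscr{P}_I(U_{k,n})$ is $n$-dimensional, its Ehrhart polynomial has degree $n$, so these two observations account for every coefficient. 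Hence all coefficients of $i(\mathscr{P}_I(U_{k,n}),t)$ are positive, which is exactly the assertion.

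The conceptual content of the argument lives entirely in the preceding results: the disjoint tiling \eqref{decompo} and, above all, the previous Theorem on the Ehrhart positivity of the half-open hypersimplices, whose proof rests on the weighted Lah number estimate of Corollary \ref{corito}. Granting these, the present statement is little more than an assembly step. The only places demanding genuine care are the verification that the half-open pieces continue to tile $t\,\mathscr{P}_I(U_{k,n})$ disjointly after dilation, so that the count is truly additive, and the bookkeeping check that the positivity range $1\le m\le n$ furnished by the previous Theorem (read with parameter $n+1$) matches precisely the non-constant coefficients of the $n$-dimensional polytope, leaving only the constant term to be pinned down separately as $1$.
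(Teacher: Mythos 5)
Your proof is correct and takes essentially the same route as the paper: the disjoint half-open decomposition \eqref{decompo}, additivity of lattice-point counts, and the positivity theorem for the $\Delta'_{j,n+1}$, with the constant term collapsing to $1$. The paper's proof is simply a terser version of yours (it even writes $\Delta'_{j,n}$ where $\Delta'_{j,n+1}$ is meant); your explicit check that the tiling is compatible with dilation is exactly the detail the paper leaves implicit.
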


\begin{proof}
    From the disjoint decomposition of equation \eqref{decompo} it follows that:
        \[i\left(\mathscr{P}_I(U_{k,n}\right), t) = \sum_{j=1}^k i(\Delta'_{j,n}, t),\]
    and hence, the independent term is 1, and the rest of them are positive because in each summand on the right one has such positivity.
\end{proof}

\bibliographystyle{plain}
\bibliography{bibliopaper}

\end{document}